\def\divdots{\rlap{\raisebox{-1pt}{.}}{\rlap{\raisebox{2pt}{.}}\raisebox{5pt}{.}}}
\def\ndivby{\mathrel{
    \divdots
    \kern-0.35em\raise0.22ex\hbox{/}
}}
\renewcommand{\geq}{\geqslant}
\def\fs{\kern 0.5em}
\newcounter{prcnt}
\newcounter{pucnt}
\newcommand{\prmain}[1]{ 
    \medskip%
    \setcounter{pucnt}{0}%
    \stepcounter{prcnt}%
    \noindent\textbf{%
    \theprcnt%
    \ifthenelse{\equal{#1}{1}}{*}{}%
    .}\fs%
}
\newcommand{\pumain}[1]{
    \stepcounter{pucnt}{%
    \noindent\bf(\alph{pucnt}%
    \ifthenelse{\equal{#1}{1}}{*}{}%
    )}\fs%
}
\newtheorem{problem}{Problem}
\newtheorem{theorem}{Theorem}
\newtheorem{proposition}{Proposition}
\newtheorem{lemma}{Lemma}
\newtheorem*{theorem*}{Теорема}
\theoremstyle{definition}
\title{On stability of spanning tree degree 
enumerators}
\author{Danila Cherkashin$^\mathrm{a,b}$, Fedor Petrov$^\mathrm{c,d}$ and Pavel Prozorov$^{\mathrm{e}}$\\
{\small ~a. Institute of Mathematics and Informatics, Bulgarian Academy of Sciences}\\
{\small ~b. Chebyshev Laboratory, St. Petersburg State University, 14th Line V.O., 29B, Saint Petersburg 199178 Russia}\\
{\small ~c. St. Petersburg State University (Russia)}\\
{\small ~d. St. Petersburg Department of the Steklov Mathematical Institute RAS (Russia)}\\
{\small ~e. Lyceum 533, St. Petersburg, Russia}}
\begin{document}

\maketitle


\let\thefootnote\relax\footnotetext{E-mail: jiocb.orlangyr@gmail.com, fedyapetrov@gmail.com, pasha07082005@gmail.com}

\begin{abstract}
    We show that the spanning tree degree 
enumerator polynomial of a connected graph $G$ is a real stable polynomial if and only if $G$ is distance-hereditary.
\end{abstract}

\section{Introduction}

Let
\[
\mathbb{H}:=\{ z \in\mathbb{C}|\Im(z)>0\}
\]
denote the upper complex half-plane.
A polynomial $P(x_1,x_2, \dots,x_n)$ 
with real coefficients is called \textit{real stable}, if
 $P(z_1,z_2, \dots,z_n)\not=0$ whenever $z_1,z_2, \dots,z_n \in \mathbb{H}$.
It is clear that a (non-zero) linear form 
\[
a_1x_1+a_2x_2+\dots+a_nx_n, \quad \quad a_i\geqslant 0,
\]
is real stable, and that the product of finitely
many real stable polynomials is real stable.
The following properties of real
stable polynomials are well known, see, for example \cite{wagner2011multivariate}.
\begin{proposition}
\label{pr:basic}
Let $P(x_1,x_2, \dots, x_n)$ be a real
stable polynomial. Then the following polynomials are also real
stable or identically zero:
\begin{itemize}
\item[(i)] $x_1^{d_1}P\left(-\frac{1}{x_1}, x_2, \dots, x_n\right)$, where $d_1$ 
is a degree of $P$ with respect to the variable $x_1$;
\item[(ii)] $\frac{\partial P}{\partial x_1}(x_1,x_2, \dots, x_n)$;
\item[(iii)] $Q(x_1,x_2, \dots, x_{n-1}) :=  P(x_1,x_2, \dots, x_{n-1},a)$ for any real $a$.
\end{itemize}
\end{proposition}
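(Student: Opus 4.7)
The plan is to handle the three parts independently, each by invoking a classical tool from one-variable complex analysis applied to polynomials.

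For (i), the key observation is that the Möbius map $z\mapsto -1/z$ preserves $\mathbb{H}$: if $z=a+bi$ with $b>0$, then $-1/z=(-a+bi)/|z|^2$ still has positive imaginary part. Given $z_1,\ldots,z_n\in\mathbb{H}$, we therefore have $-1/z_1\in\mathbb{H}$, so $P(-1/z_1,z_2,\ldots,z_n)\neq 0$ by real stability of $P$, while $z_1^{d_1}\neq 0$ since $z_1\neq 0$. The choice $d_1=\deg_{x_1}P$ ensures that $x_1^{d_1}P(-1/x_1,x_2,\ldots,x_n)$ is a genuine polynomial (no negative powers remain), and the argument just given shows it does not vanish on $\mathbb{H}^n$.

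For (ii), I would fix arbitrary $z_2,\ldots,z_n\in\mathbb{H}$ and consider the univariate polynomial $q(t):=P(t,z_2,\ldots,z_n)\in\mathbb{C}[t]$. Real stability of $P$ says that all zeros of $q$ lie in the closed lower half-plane $\{w:\Im w\le 0\}$. By the Gauss--Lucas theorem, the zeros of $q'$ lie in the convex hull of the zeros of $q$, hence in this (convex) half-plane as well. Therefore $q'(t)=\partial P/\partial x_1(t,z_2,\ldots,z_n)$ is nonvanishing on $\mathbb{H}$, and since $z_2,\ldots,z_n$ were arbitrary, $\partial P/\partial x_1$ is real stable (or identically zero when $P$ does not depend on $x_1$).

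For (iii), I would invoke Hurwitz's theorem. For $\varepsilon>0$ let $P_\varepsilon(x_1,\ldots,x_{n-1}):=P(x_1,\ldots,x_{n-1},a+i\varepsilon)$. Since $a+i\varepsilon\in\mathbb{H}$, each $P_\varepsilon$ is nonvanishing on $\mathbb{H}^{n-1}$ by real stability of $P$, and as $\varepsilon\to 0^+$ the $P_\varepsilon$ converge uniformly on compact subsets of $\mathbb{C}^{n-1}$ to $Q(x_1,\ldots,x_{n-1})=P(x_1,\ldots,x_{n-1},a)$. The several-variable version of Hurwitz's theorem then forces $Q$ to be either identically zero or nowhere vanishing on $\mathbb{H}^{n-1}$. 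The only real subtlety is (iii): the multivariate Hurwitz statement itself, which one can reduce to the classical one-variable version by restricting to a complex line $\lambda\mapsto(z_1+\lambda w_1,\ldots,z_{n-1}+\lambda w_{n-1})$ passing through a hypothetical zero of $Q$ with direction chosen so that the line meets $\mathbb{H}^{n-1}$ in a nonempty open set.
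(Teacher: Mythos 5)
The paper itself offers no proof of this proposition --- it is quoted as standard with a citation to Wagner's survey --- so there is no in-paper argument to compare against; what you wrote is essentially the classical proof from that literature (M\"obius invariance of $\mathbb{H}$ for (i), Gauss--Lucas for (ii), Hurwitz for (iii)), and it is sound in outline. Two places deserve tightening. In (ii), Gauss--Lucas applies only when the univariate restriction $q(t)=P(t,z_2,\dots,z_n)$ is non-constant, so you must rule out the scenario where $P$ genuinely depends on $x_1$ yet $q$ degenerates to a nonzero constant for some particular $(z_2,\dots,z_n)\in\mathbb{H}^{n-1}$: in that case $q'\equiv 0$, and $\partial P/\partial x_1$ would vanish at points of $\mathbb{H}^n$ without being identically zero, contradicting the claim. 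This degeneration cannot occur because the leading coefficient $P_{d_1}(x_2,\dots,x_n)$ of $P$ in $x_1$ is itself nonvanishing on $\mathbb{H}^{n-1}$; for instance, the polynomials $\lambda^{-d_1}P(\lambda x_1,x_2,\dots,x_n)$ with real $\lambda>0$ are nonvanishing on $\mathbb{H}^n$ and converge to $x_1^{d_1}P_{d_1}(x_2,\dots,x_n)$ as $\lambda\to\infty$, so the Hurwitz argument you already invoke for (iii) shows $P_{d_1}$ does not vanish on $\mathbb{H}^{n-1}$, hence $\deg_t q=d_1\geq 1$ for every choice of arguments there. In (iii), the condition you place on the direction of the complex line is not the operative one: since $\mathbb{H}^{n-1}$ is open, \emph{every} line through a point of it meets it in a nonempty open set; what you actually need is a direction along which $Q$ is not identically zero, and such a direction exists (indeed generically) precisely because $Q\not\equiv 0$. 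With these two small repairs the proof is complete.
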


One-variable real stable polynomial $f(x)$ with non-negative coefficients has real
non-positive roots, that is, it has the form $f(x)=c(x+a_1)(x+a_2)\ldots (x+a_n)$
for $c>0$ and non-negative $a_1,\ldots,a_n$. Many inequalities for its coefficients,
like normalized log-concavity, follow from this. Now, if $f(x_1,\ldots,x_n)$
is real stable with non-negative coefficients, then Proposition \ref{pr:basic} allows
to get many one-variable polynomials from $f$ by such operations as 
identifying several variables, replacing some other variables to non-negative constants
and taking derivatives. This in turn gives us a lot of information about the coefficients of $f$.

To be specific, let us formulate one non-technical claim. Recall that for a polynomial $P(x_1,\ldots,x_n)$ its Newton polytope $\mathcal{N}(P)$ is defined as the convex hull of vectors $(a_1,\ldots,a_n)\in \mathbb{Z}_{\geqslant 0}^n$ for which the coefficient $[\prod x_i^{a_i}]P$ of the monomial $\prod x_i^{a_i}$ in $P$ is non-zero. If, additionally, for any integer point $(b_1,\ldots,b_n)\in \mathcal{N}(P)$ we have $[\prod x_i^{b_i}]P\ne 0$, we call the Newton polynomial of $P$ \textit{saturated}. See~\cite{monical2019newton} for the results on saturated Newton polynomials in combinatorics.

\begin{proposition}
Let $f(x_1,...,x_n)$ be a real stable polynomial with non-negative coefficients. Then the Newton polytope of $f$ is saturated.
\label{PropNewton}
\end{proposition}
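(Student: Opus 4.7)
I would proceed by induction on the number of variables $n$. The base case $n=1$ is explicit: a univariate real stable polynomial with non-negative coefficients factors as $c\,x^{m}\prod_{j=1}^{d-m}(x+a_{j})$ with $c>0$ and $a_{j}>0$. Each intermediate coefficient is then a positive elementary symmetric function in the $a_{j}$, so the support is the interval $\{m,m+1,\dots,d\}$, which coincides with $\mathcal{N}(f)\cap\mathbb{Z}$.

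For the inductive step, write $f=\sum_{k\geq 0} f_{k}(x_{1},\dots,x_{n-1})\,x_{n}^{k}$. By parts (ii) and (iii) of Proposition~\ref{pr:basic}, every coefficient $f_{k}=\tfrac{1}{k!}(\partial_{x_{n}}^{k}f)|_{x_{n}=0}$ is real stable with non-negative coefficients in $n-1$ variables, and by the inductive hypothesis each non-zero $f_{k}$ has saturated Newton polytope. Fix $b=(b_{1},\dots,b_{n})\in\mathcal{N}(f)\cap\mathbb{Z}^{n}$. Substituting generic positive reals for $x_{1},\dots,x_{n-1}$ yields a univariate real stable polynomial in $x_{n}$ whose support equals $K:=\{k:f_{k}\not\equiv 0\}$; the base case forces $K$ to be an integer interval, and since $b_{n}$ lies in the projection of $\mathcal{N}(f)$ onto the last coordinate, we have $b_{n}\in K$ and $f_{b_{n}}\not\equiv 0$. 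If, in addition, $(b_{1},\dots,b_{n-1})\in\mathcal{N}(f_{b_{n}})$, then the inductive hypothesis gives $b\in\mathrm{supp}(f)$, closing the induction.

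\textbf{Main obstacle.} The remaining step --- verifying that the horizontal slice $\mathcal{N}(f)\cap\{x_{n}=b_{n}\}$ coincides with $\{b_{n}\}\times\mathcal{N}(f_{b_{n}})$ --- is where real stability must be used substantively, since for a general polynomial the slice of a Newton polytope is strictly larger than the Newton polytope of the corresponding $x_{n}^{k}$-coefficient. I plan to handle it by establishing the stronger property of $\mathrm{supp}(f)$ known as \emph{M-convexity}: for any exponents $\alpha,\beta\in\mathrm{supp}(f)$ and any coordinate $i$ with $\alpha_{i}>\beta_{i}$, some coordinate $j$ with $\alpha_{j}<\beta_{j}$ admits the exchange $\alpha-e_{i}+e_{j},\,\beta+e_{i}-e_{j}\in\mathrm{supp}(f)$. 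M-convexity should follow by applying $\partial^{\gamma}/\prod\gamma_{k}!$ to $f$ with $\gamma=\min(\alpha,\beta)$ componentwise, specializing the ``inactive'' variables to generic positive reals, and invoking the base-case interval structure along the two surviving axes $x_{i}$ and $x_{j}$. Once $\mathrm{supp}(f)$ is M-convex, $\mathcal{N}(f)$ is recognised as an integral generalized permutohedron whose integer points coincide with its support, and the desired slice identity --- hence the inductive step --- follows at once.
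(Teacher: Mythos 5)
The paper itself does not prove Proposition~\ref{PropNewton}: it is stated as a known fact, with a pointer to a quantitative version (Theorem~2.5 of \cite{Csikvari2022ASS}) and, in the authors' working notes, to the M-convexity of supports coming from the theory of Lorentzian polynomials. So your attempt must be judged on its own merits. The routine parts are correct: the univariate base case, the stability (or vanishing) of each coefficient $f_k$, the fact that $K=\{k:f_k\not\equiv 0\}$ is an integer interval, and --- importantly --- you correctly isolate the real difficulty, namely that a slice of $\mathcal{N}(f)$ may a priori be strictly larger than the Newton polytope of the corresponding coefficient $f_{b_n}$.

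However, the step you propose to close that gap does not work as described, and it is precisely the heart of the matter. First, the exchange property you call M-convexity is literally false for non-homogeneous stable polynomials: for $f=1+x_1$ the support $\{0,1\}$ does not lie on a hyperplane $\sum_i\alpha_i=\mathrm{const}$, so for $\alpha=(1)$, $\beta=(0)$, $i=1$ there is no coordinate $j$ with $\alpha_j<\beta_j$ at all. The correct notion is $M^{\natural}$-convexity (equivalently, M-convexity of the support of the homogenization); and one must be careful that the weaker ``jump system'' property, which is what is true for general stable polynomials, does \emph{not} imply saturation (the set $\{0,2\}\subset\mathbb{Z}$ is a jump system with a hole). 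Second, and more seriously, your proposed derivation --- differentiate by $\min(\alpha,\beta)$, specialize all variables except $x_i,x_j$ to generic positive reals, and invoke the univariate interval structure --- only controls the \emph{projection} of the support onto the $(i,j)$-coordinate plane. The exchange axiom demands that the specific lattice point $\alpha-e_i+e_j$, agreeing with $\alpha$ in every other coordinate, lie in $\mathrm{supp}(f)$; the presence of $(\alpha_i-1,\alpha_j+1)$ in the two-dimensional projection could be witnessed by a completely different monomial. This loss of information is exactly why the known proofs of $M^{\natural}$-convexity of such supports require genuinely stronger tools (polarization together with the Grace--Walsh--Szeg\H{o} theorem in Br\"and\'en's jump-system argument, or the machinery of Lorentzian polynomials in Br\"and\'en--Huh). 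As written, the crucial step of your argument is missing; the honest options are to cite one of these results, or to homogenize (which preserves stability for polynomials with non-negative coefficients) and invoke the M-convexity of Lorentzian supports.
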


Note that Theorem 2.5 in \cite{Csikvari2022ASS} provides a quantitative version of Proposition~\ref{PropNewton}. 
Moreover, the same statement holds after identifying some variables (see Subsection~\ref{sybsect:CombCor}).


That's why real stability of the polynomials with combinatorially meaningful coefficients is quite useful. More about stable polynomials
and their applications may be read in the surveys~\cite{Csikvari2022ASS,wagner2011multivariate}.

Then move to spanning tree degree enumerators. Let $G=(V, E)$ be a finite simple connected undirected graph, and let $|V|=n, |E|=k$. Denote $N_G(v)=\{u\in V\colon vu\in E\}$ the neighborhood of a vertex $v$, and let $\deg_G(v)$ denote the degree of $v$.
For a subset $U \subset V$ of vertices we define the \textit{induced subgraph} $G[U]$ as the graph whose vertices are elements of $U$, and whose edges are those edges of $G$ which have both endpoints in $U$. By $S(G)$ denote the set of all spanning trees of $G$.
A complete graph on $n$ vertices 
is denoted $K_n$, a complete bipartite graph
is denoted by $K_{n,m}$ where $n,m$ are sizes of the parts.

Enumerate the edges of $G$ as $1,\ldots,k$ and consider the
variable $x_i$ for every edge $i=1,2,\ldots,k$.
Define the \textit{edge spanning polynomial} of a graph $G$ as
\[
Q_G(x_1,x_2, \dots, x_k )=\sum_{T\in S(G) }\prod_{j \in E(T)}x_j.
\]
It is known~\cite{choe2004homogeneous} that the polynomial
$Q_G$ is real stable for every finite connected simple graph $G$. Alternatively, we may enumerate
the vertices as $1,\ldots,n$, introduce the variables $x_i$'s
for all vertices $i=1,\ldots,n$ and consider the
\textit{vertex spanning polynomial} 
\[
P_G(x_1, x_2, \dots, x_n)=\sum_{T \in S(G)} \prod_{v \in V}x_v^{\deg_T(v)-1}.
\]
The study of this type of polynomial goes
back to Cayley's paper of 1889, see~\cite{cayley1889theorem},
where it is shown that 
\[
P_{K_n}=(x_1+\ldots+x_n)^{n-2}.
\]

The polynomial $P_G$ is not always real stable. For example, let us show
that it is not real stable for a cycle $C_5$.
Indeed, if
\[
P_{C_5}(x_1, x_2, x_3, x_4, x_5)=x_1x_2x_3+x_2x_3x_4+x_3x_4x_5+x_4x_5x_1+x_5x_1x_2.
\]
was real stable, then so is 
\[
P_{C_5}(1, x_2, -1, x_4, x_5)=x_2(x_5-x_4-1)
\]
by p.~(iii) of Proposition~\ref{pr:basic}, but this equals to 0
when $x_4=i, x_5=1+i$. 

So a natural question arises:
\begin{problem}
\label{pr:main}
For which graphs $G$ is the polynomial 
\[
P_G(x_1, x_2, \dots, x_n)=\sum_{T \in S(G)} \prod_{v \in V}x_v^{\deg_T(v)-1}
\]
is real stable?
\end{problem}

We call a graph \textit{stable},
if the polynomial $P_G$ is real stable. 
A graph is \textit{distance-hereditary}, 
if for any connected induced subgraph 
the distances between its vertices
are the same as in the initial graph. 
More on this class of graphs may be read, for example, in the book~\cite{brandstadt1999graph}.

Our main result is the following theorem, which solves
Problem~\ref{pr:main}.
\begin{theorem}
\label{th:main}
A finite simple connected graph $G$ is stable if and only
if it is distance-hereditary.
\end{theorem}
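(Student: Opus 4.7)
The plan rests on two classical facts about distance-hereditary graphs (Bandelt and Mulder, 1986) that I would take as black boxes: (a) the constructive characterisation, saying every connected distance-hereditary graph is obtained from $K_1$ by a sequence of additions of three kinds — a pendant vertex attached to some existing $u$, a false twin of $u$ (a new $v$ with $N(v)=N(u)$), or a true twin of $u$ ($N(v)=\{u\}\cup N(u)$); and (b) the companion forbidden-subgraph characterisation, saying a connected graph is distance-hereditary if and only if it contains no induced $C_n$ with $n\geq 5$, house, gem, or domino. Using (a) the forward direction will be proved by induction, and using (b) the backward direction will follow from a descent lemma plus explicit non-stability computations on finitely many sporadic graphs and on the family $\{C_n\}_{n\geq 5}$.

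For the forward direction I proceed by induction on the construction. The pendant case is immediate, since $P_{G+\text{pendant at }u}=x_u\cdot P_G$. For twins I first conjecture, then verify, the closed formulas
\[
P_{G+\text{false twin of }u}=L_u\cdot P_G(x_u+x_v,\,x_{\text{rest}}),\qquad P_{G+\text{true twin of }u}=(x_u+x_v+L_u)\cdot P_G(x_u+x_v,\,x_{\text{rest}}),
\]
where $L_u:=\sum_{w\in N_G(u)}x_w$ and $P_G(x_u+x_v,\,x_{\text{rest}})$ is obtained from $P_G$ by substituting $x_u+x_v$ for $x_u$. The cleanest route is the weighted matrix-tree theorem: writing $P_{G'}\cdot\prod_w x_w=\det(L^{G'}_{(v),(v)})$ with $L^{G'}$ the edge-weighted Laplacian (weight $x_a x_b$ on edge $ab$), one observes that after deletion of the row and column for $v$ one has $L^{G'}_{(v),(v)}=L^G+x_v D$ for an explicit diagonal matrix $D$ (plus a rank-one correction in the true-twin case); expanding by multilinearity and invoking the matrix-tree theorem for rooted spanning forests assembles the right-hand side. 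Once the identities are in hand, stability is preserved because (i) the substitution $x_u\mapsto x_u+x_v$ is stability-preserving (the sum of two elements of $\mathbb{H}$ stays in $\mathbb{H}$), (ii) the linear forms $L_u$ and $x_u+x_v+L_u$ have non-negative coefficients, hence are real stable, and (iii) products of real stable polynomials are real stable. I expect the rigorous verification of the twin formulas to be the main technical obstacle.

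For the backward direction the central tool is a descent lemma: if $P_G$ is real stable and $v$ is a non-cut vertex of $G$, then $P_{G-v}$ is real stable. To prove it, substitute $x_v\mapsto t\,x_v$ in $P_G$ and write the result as $\sum_{k\geq 0} t^k Q_k(x)$. For each $t>0$ this is a real stable polynomial in $x$, and as $t\to 0^+$ it converges on compacts to the leading term $Q_{k_0}$; by Hurwitz's theorem $Q_{k_0}$ is real stable or identically zero. A short analysis shows that $k_0=0$ (the minimum is attained precisely when $v$ is a leaf of the spanning tree, which is possible since $v$ is not a cut vertex) and that $Q_0=L_v\cdot P_{G-v}$, via the bijection between spanning trees of $G$ with $v$ a leaf and pairs $(T,w)$ with $T\in S(G-v)$, $w\in N_G(v)$. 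Since $L_v$ and $P_{G-v}$ are both nonzero real polynomials and their product is real stable, each factor is real stable (real stability of a product is inherited by every nonzero factor); in particular $P_{G-v}$ is real stable. Iterating the lemma, at each step removing a leaf of a spanning tree of the current graph chosen so that the leaf lies outside a fixed connected induced subgraph $H$, shows that $P_H$ is real stable for every connected induced subgraph $H$ of any stable $G$.

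It remains to refute real stability for each forbidden $H$. For cycles $C_n$ with $n\geq 5$ the key computation is
\[
P_{C_n}(x_1,x_2,1,\ldots,1)=1+x_1+x_2+(n-3)x_1x_2,
\]
and setting $x_1=i$ gives $x_2=-(1+i)/(1+(n-3)i)$, whose imaginary part is $(n-4)/(1+(n-3)^2)>0$ whenever $n\geq 5$; both arguments lie in $\mathbb{H}$, so $P_{C_n}$ has a zero there. The three sporadic graphs (house, gem, domino) are dispatched by analogous specialisations that combine Proposition~\ref{pr:basic}(iii) (real substitutions) with an explicit complex zero, in the spirit of the author's argument for $C_5$ — a finite case analysis that is the only remaining ingredient once the descent lemma is in place.
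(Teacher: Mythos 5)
Your plan is essentially the paper's own proof: the same Bandelt--Mulder constructive and forbidden-subgraph characterisations, the same two twin identities (which are exactly the paper's Lemmas~\ref{lm:doubling} and~\ref{lm:doublingplusedge}, proved there combinatorially by conditioning on the forest induced on the remaining vertices rather than via the matrix-tree theorem), and the same descent to connected induced subgraphs (Lemma~\ref{lm:reduced}, obtained there by simply setting $x_v=0$ and invoking Proposition~\ref{pr:basic}(iii) instead of your Hurwitz limit, which amounts to the same thing). The only pieces you defer --- the verification of the twin identities and the explicit zeros for the house, gem and domino --- are precisely what the paper writes out in full, so nothing in your outline fails; and your uniform computation $P_{C_n}(x_1,x_2,1,\dots,1)=1+x_1+x_2+(n-3)x_1x_2$ with the zero at $x_1=i$, $x_2=-(1+i)/(1+(n-3)i)$ is correct and treats all $n\geq 5$ at once, a bit more cleanly than the paper's separate handling of $C_5$ and $n\geq 6$.
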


A proof of Theorem~\ref{th:main} consists of two parts.

\paragraph{Distance-hereditary graphs
are stable.} 

In~\cite{bandelt1986distance} it is proved that
any distance hereditary graph containing at least
two vertices may be obtained from the graph 
$K_{1,1}$ by the following operations: adding a copy 
$\tilde{u}$ of an already
existing vertex $u$ (with or without adding an edge
$u\tilde{u}$), and adding a new vertex with one edge
joining it with an already existing vertex.

It is clear that $K_{1,1}$ is stable. Lemmas~\ref{lm:doubling},~\ref{lm:doublingplusedge} and~\ref{lm:gluing}
show, correspondingly, that doubling a vertex without joining 
two copies; doubling a vertex with joining two copies;
gluing two stable graphs by a vertex are stable-preserving
operations. Since adding an edge is gluing with $K_{1,1}$, 
and distance-hereditary graph is therefore stable.

\paragraph{Stable graphs are distance-hereditary.} 

We start with defining several special graphs. 

A \textit{gem} is a graph on $5$ vertices which consists of
a 5-cycle and two of its diagonals which share a common endpoint.

A \textit{domino} is a graph on $6$ vertices which
consists of a 6-cycle and one of its main diagonals.

A \textit{house} is a graph on $5$ 
vertices, which consists of a 5-cycle and one diagonal.

Let $\mathcal{G}$ be a class of graphs which contain neither an empty cycle of length at least 5, nor 
a gem, nor a house, nor a domino as an induced subgraph. (see
Figure~\ref{fig:forbidden}).

\begin{figure}[H]
    \centering
        \hfill  
   \begin{tikzpicture}[scale=1.8]
    
    \clip(1.5,0.8) rectangle (3.5,2.8);

    \coordinate(a1) at (2,1);
    \coordinate(a2) at (3,1);
    \coordinate(a3) at (3.3,1.9);
    \coordinate(a4) at (2.5,2.6);
    \coordinate(a5) at (1.7,1.9);

    \draw [blue,ultra thick] (a1)--(a2)--(a3)--(a4)--(a5);
    \draw [blue,ultra thick,dashed] (a5)--(a1);

    \fill (a1) circle (1.2pt);
    \fill (a2) circle (1.2pt);
    \fill (a3) circle (1.2pt);
    \fill (a4) circle (1.2pt);
    \fill (a5) circle (1.2pt);

\end{tikzpicture}
     \hfill
   \begin{tikzpicture}[scale=1.8,rotate=180]
    
    \clip(1.5,0.8) rectangle (3.5,2.8);

    \coordinate(a1) at (2,1);
    \coordinate(a2) at (3,1);
    \coordinate(a3) at (3.3,1.4);
    \coordinate(a4) at (2.5,2.6);
    \coordinate(a5) at (1.7,1.4);

    \draw [blue,ultra thick] (a1)--(a2)--(a3)--(a4)--(a5)--(a1);
    \draw [blue,ultra thick] (a1)--(a4)--(a2);
    
    \fill (a1) circle (1.2pt) node[right]{4};
    \fill (a2) circle (1.2pt) node[left]{3};
    \fill (a3) circle (1.2pt) node[left]{2};
    \fill (a4) circle (1.2pt) node[below]{1};
    \fill (a5) circle (1.2pt) node[right]{5};

\end{tikzpicture}
    \hfill 
    \begin{tikzpicture}[scale=1.8]
    
    \clip(1.5,0.8) rectangle (3.5,2.8);

    \coordinate(a1) at (2,1);
    \coordinate(a2) at (3,1);
    \coordinate(a3) at (3.3,1.9);
    \coordinate(a4) at (2.5,2.6);
    \coordinate(a5) at (1.7,1.9);

    \draw [blue,ultra thick] (a1)--(a2)--(a3)--(a4)--(a5)--(a1);
    \draw [blue,ultra thick] (a3)--(a5);

    \fill (a1) circle (1.2pt) node[below]{1};
    \fill (a2) circle (1.2pt) node[below]{5};
    \fill (a3) circle (1.2pt) node[below right]{4};
    \fill (a4) circle (1.2pt) node[below]{3};
    \fill (a5) circle (1.2pt) node[below left]{2};

\end{tikzpicture}
    \hfill 
    \begin{tikzpicture}[scale=1.8]
    
    \clip(1.5,0.8) rectangle (3.5,2.8);

    \coordinate(a1) at (1.7,1);
    \coordinate(a2) at (2.5,1);
    \coordinate(a3) at (3.3,1);
    \coordinate(a4) at (1.7,2.6);
    \coordinate(a5) at (2.5,2.6);
    \coordinate(a6) at (3.3,2.6);

    \draw [blue,ultra thick] (a1)--(a2)--(a3)--(a6)--(a5)--(a4)--(a1);
    \draw [blue,ultra thick] (a2)--(a5);

    \fill (a1) circle (1.2pt) node[below left]{6};
    \fill (a2) circle (1.2pt) node[below left]{1};
    \fill (a3) circle (1.2pt) node[below left]{2};
    \fill (a4) circle (1.2pt) node[below left]{5};
    \fill (a5) circle (1.2pt) node[below left]{4};
    \fill (a6) circle (1.2pt) node[below left]{3};

\end{tikzpicture}
    \caption{Forbidden induced subgraphs, left to right: 
    cycle of length at least 5, gem, house, domino}
    \label{fig:forbidden}
\end{figure}
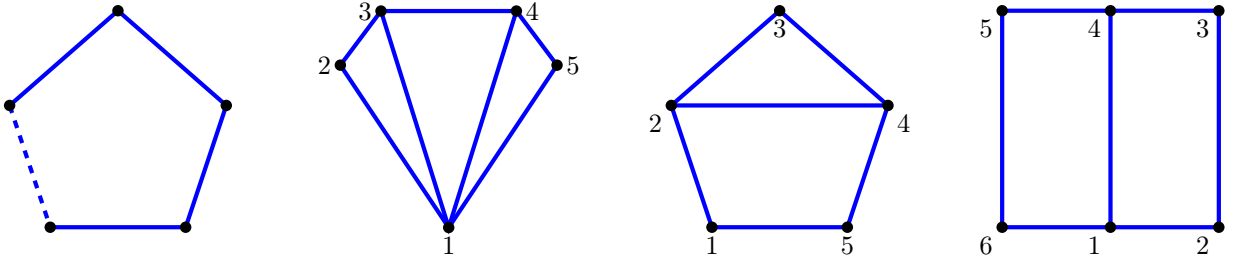

In~\cite{bandelt1986distance} it is shown that
$\mathcal{G}$ coincides with the class
of distance-hereditary graphs. 


Lemmas~\ref{lm:Clong},~\ref{lm:forbidden},~\ref{lm:reduced} yield that any stable graph contains neither
a cycle of length at least 5, nor a gem, a house or a domino. Thus it is distance-hereditary.

\section{Lemmas for the first part of the proof of Theorem~\ref{th:main}}

\begin{lemma}[Doubling without edge]
\label{lm:doubling}
Let $G$ be a graph on $n$ vertices $1,\ldots,n$. 
Let $G_1$ be a graph on vertices 
$1,\ldots,n+1$, obtained from $G$ 
by adding a vertex $n+1$
which has the same neighbors as $n$ and is not joined with
$n$. Then 
\begin{equation}\label{dwe}
    P_{G_1}(x_1, x_2, \dots, x_{n+1})=P_G(x_1, x_2, \dots, x_n+x_{n+1})\left(\sum_{v \in N_G(n)}x_v\right).
\end{equation}
\end{lemma}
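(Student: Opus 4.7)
The plan is to construct a weight-preserving bijection between $S(G_1)$ and triples $(T,u,A)$, where $T\in S(G)$, $u\in N_G(n)$, and $A\subseteq N_T(n)$ satisfies a compatibility condition described below.

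For the forward map, take $T_1\in S(G_1)$ and let $w\in N_{T_1}(n+1)$ be the neighbor of $n+1$ on the unique path from $n$ to $n+1$ in $T_1$. I set $u:=w$, $A:=N_{T_1}(n)$, and let $T$ be the tree on $\{1,\dots,n\}$ obtained from $T_1$ by deleting the vertex $n+1$ and then joining $n$ to every vertex in $N_{T_1}(n+1)\setminus\{w\}$. The crucial observation is that the sets $N_{T_1}(n)$ and $N_{T_1}(n+1)\setminus\{w\}$ are disjoint: if some $v\neq w$ were adjacent in $T_1$ to both $n$ and $n+1$, then the edges $(n,v)$ and $(v,n+1)$ together with the $n$-to-$(n+1)$ path of $T_1$ through $w$ would form a cycle. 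This ensures $T\in S(G)$ with $N_T(n)=A\sqcup(N_{T_1}(n+1)\setminus\{w\})$; all vertex degrees are preserved except $\deg_T(n)=\deg_{T_1}(n)+\deg_{T_1}(n+1)-1$ and $\deg_T(u)=\deg_{T_1}(u)-1$.

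For the inverse, given $T\in S(G)$ and $u\in N_G(n)$, let $a(u)\in N_T(n)$ be the unique neighbor of $n$ in $T$ whose component of $T-\{n\}$ contains $u$ (so $a(u)=u$ if $u\in N_T(n)$). I claim that the image of the forward map is precisely the set of triples $(T,u,A)$ with $a(u)\in A\subseteq N_T(n)$: given such a triple, split $n$ into two vertices $n,n+1$, letting $n$ keep the edges to $A$ and $n+1$ take the edges to $N_T(n)\setminus A$, and finally add the edge $(n+1,u)$. The condition $a(u)\in A$ is exactly what makes this new edge reconnect the two resulting components into a single spanning tree of $G_1$.

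Under this bijection the weight of $T_1$ equals $x_u\cdot x_n^{|A|-1}x_{n+1}^{\deg_T(n)-|A|}\prod_{v<n}x_v^{\deg_T(v)-1}$. For fixed $(T,u)$ the number of admissible $A$ with $|A|=a$ is $\binom{\deg_T(n)-1}{a-1}$, obtained by choosing $a-1$ further elements from $N_T(n)\setminus\{a(u)\}$, and
\[
\sum_{a=1}^{d}\binom{d-1}{a-1}x_n^{a-1}x_{n+1}^{d-a}=(x_n+x_{n+1})^{d-1},\qquad d:=\deg_T(n).
\]
Summing over $u\in N_G(n)$ then assembles the factor $\sum_{u\in N_G(n)}x_u$, and the outer summation over $T\in S(G)$ produces $P_G(x_1,\dots,x_{n-1},x_n+x_{n+1})$, yielding the claimed identity. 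The main obstacle is the disjointness claim and the accompanying characterization of the image via the condition $a(u)\in A$; both rest on the tree-cycle argument.
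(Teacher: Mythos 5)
Your proof is correct, and it takes a genuinely different route from the paper's. The paper conditions on the common induced forest $K=T[1,\ldots,n-1]$ and, for each $K$, writes both $P_G^K$ and $P_{G_1}^K$ in closed form as products over the intersections $A_i$ of $N_G(n)$ with the components of $K$; the identity then reduces to the algebraic observation $\sum_i S_i^2\prod_{j\ne i}S_j=\bigl(\sum_i S_i\bigr)\prod_j S_j$. You instead build an explicit weight-preserving bijection between $S(G_1)$ and triples $(T,u,A)$ with $a(u)\in A\subseteq N_T(n)$, i.e.\ you merge $n+1$ into $n$ (recording the path-neighbor $w$ as the extra datum $u$) and split back; this is close in spirit to the paper's own proof of the doubling-\emph{with}-edge lemma, which contracts the edge $n(n+1)$. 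Your key points --- the disjointness of $N_{T_1}(n)$ and $N_{T_1}(n+1)\setminus\{w\}$, the characterization of the image by $a(u)\in A$, and the degree bookkeeping $\deg_{T_1}(u)=\deg_T(u)+1$ --- all check out, including the boundary cases $u\in N_T(n)$ and $\deg_{T_1}(n+1)=1$, and the binomial sum correctly reassembles $(x_n+x_{n+1})^{\deg_T(n)-1}$. What each approach buys: yours avoids the per-forest case analysis and produces the factor $\sum_{v\in N_G(n)}x_v$ directly as the sum over the marked vertex $u$, while the paper's computation exhibits each $P_{G_1}^K$ explicitly as a product of linear forms, which feeds the later observation that $P_G$ factors completely for distance-hereditary graphs. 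The only expository gap is that you assert rather than argue that the merged graph $T$ is a spanning tree of $G$; a sentence noting that $T_1\setminus\{n+1\}$ has one component per neighbor of $n+1$, with $n$ and $w$ sharing a component, would close it.
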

\begin{proof}
If $T$ is a spanning tree in $G$ or in $G_1$, then the induced
subgraph $T[1,\ldots,n-1]$ is a forest $K$ on the vertices
$1,\ldots,n-1$. 
Denote $P_G^K(x_1, x_2, \dots, x_{n})$,
correspondingly $P_{G_1}^K(x_1, x_2, \dots, x_{n},x_{n+1})$
the sum of $\prod_v x_v^{\deg_T(v)-1}$
over all spanning trees $T$ of $G$,
correspondingly $G_1$, for which 
$T[1,\ldots,n-1]=K$. 
We claim that 
\begin{equation}\label{dwe1}
 P_{G_1}^K(x_1, x_2, \dots, x_{n+1})=P_G^K(x_1, x_2, \dots, x_n+x_{n+1})\left(\sum_{v \in N_G(n)}x_v\right)
\end{equation}
 for every forest $K$ on the
 vertices $1,\ldots,n-1$. Then~\eqref{dwe} 
 follows from~\eqref{dwe1} by summation over $K$. 
Let $K$ have $t$ connected components, and let 
$A_1, A_2, \dots, A_t$ be the intersections
of the set $N_G(n)$ with the vertex sets of these
components. If $A_i$ is empty for certain index
$i$, than such $K$ could appear neither from
a spanning tree in $G$, nor from a spanning tree in $G_1$,
so we may further suppose that all $A_i$ are non-empty. 
For a spanning tree $T$ in $G$ with  $T[1,\ldots,n-1]=K$
we get $\deg_T(n)=t$, and each $A_i$ contains unique neighbor
of $n$. Thus
\[
P_G^K(x_1,\ldots,x_n)=
x_n^{t-1}\prod_{i=1}^{n-1}x_i^{\deg_K(i)-1}\prod_{i=1}^{t}\left(\sum_{v \in A_i}x_v\right). 
\]
For a spanning tree $T$ in $G$ with  $T[1,\ldots,n-1]=K$
there exists a unique $i$ for which $A_i$ is adjacent to both
$n,n+1$, and other components are joined with exactly one of
the vertices $n$ or $n+1$. Thus
\[
P_{G_1}^K(x_1,\ldots,x_{n+1})=
 \prod_{i=1}^{n-1} x_i^{\deg_K(i)-1}
\sum_{i=1}^{t} \left(\sum_{v \in A_i} x_v\right)^2
\prod_{j\ne i, 1\leqslant j\leqslant t}
\left(\sum_{v \in A_j}x_v\right)(x_n+ x_{n+1})^{t-1}=
\]
\[
(x_n+ x_{n+1})^{t-1}
\prod_{i=1}^{n-1} x_i^{\deg_K(i)-1} 
\left(\sum_{v \in N_G(n)}x_v\right)
\prod_{j=1}^t
\left(\sum_{v \in A_j}x_v\right)
\]
and \eqref{dwe1} follows.
\end{proof}
\begin{lemma}[Doubling with edge]
\label{lm:doublingplusedge}
Let $G$ be a graph on $n$ vertices $1,\ldots,n$. 
Let $G_2$ be a graph on vertices 
$1,\ldots,n+1$, obtained from $G$ 
by adding a vertex $n+1$
which has the same neighbors as $n$ and is also joined with
$n$. Then 
\begin{equation}\label{de}
P_{G_2}(x_1, x_2, \dots, x_{n+1})=P_G(x_1, x_2, \dots, x_{n-1}, x_n+x_{n+1})\left(\sum_{v \in N_G(n)}x_v+x_n+x_{n+1}\right).
\end{equation}
\end{lemma}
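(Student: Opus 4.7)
I would mimic the proof of Lemma~\ref{lm:doubling} closely. As there, fix a forest $K$ on vertices $1,\ldots,n-1$ and let $P_{G_2}^K$ denote the contribution to $P_{G_2}$ from spanning trees $T$ of $G_2$ with $T[1,\ldots,n-1]=K$; it suffices to prove the per-$K$ identity
\[
 P_{G_2}^K(x_1,\ldots,x_{n+1})=P_G^K(x_1,\ldots,x_{n-1},x_n+x_{n+1})\left(\sum_{v\in N_G(n)}x_v+x_n+x_{n+1}\right)
\]
and then sum over $K$. Let $t$ be the number of connected components of $K$ and $A_1,\ldots,A_t$ the intersections of $N_G(n)$ with those components; as in Lemma~\ref{lm:doubling} we may assume all $A_i$ are non-empty.

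The key new idea is to split spanning trees $T$ of $G_2$ into two classes according to whether the edge $\{n,n+1\}$ lies in $T$. When it does \emph{not}, the enumeration is identical to the one carried out for $G_1$ in Lemma~\ref{lm:doubling}, so this case contributes exactly
\[
(x_n+x_{n+1})^{t-1}\prod_{i=1}^{n-1}x_i^{\deg_K(i)-1}\left(\sum_{v\in N_G(n)}x_v\right)\prod_{j=1}^{t}\left(\sum_{v\in A_j}x_v\right).
\]
When the edge $\{n,n+1\}$ \emph{is} in $T$, that single edge already connects $n$ to $n+1$, and then each component of $K$ must supply exactly one edge to $\{n,n+1\}$: choose $v_i\in A_i$ and $\varepsilon_i\in\{n,n+1\}$ for each $i$, independently. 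Computing $\deg_T(n)-1$, $\deg_T(n+1)-1$, and $\deg_T(v_i)-1=\deg_K(v_i)$ in the obvious way, the contribution of this case factorizes as
\[
\prod_{i=1}^{n-1}x_i^{\deg_K(i)-1}\cdot(x_n+x_{n+1})^{t}\prod_{j=1}^{t}\left(\sum_{v\in A_j}x_v\right),
\]
where the factor $(x_n+x_{n+1})^t$ comes from independently summing each $\varepsilon_i$ over $\{n,n+1\}$.

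Adding the two contributions and pulling the common factor $(x_n+x_{n+1})^{t-1}\prod x_i^{\deg_K(i)-1}\prod_j\bigl(\sum_{v\in A_j}x_v\bigr)$ out of the bracket produces the desired factor $\sum_{v\in N_G(n)}x_v+x_n+x_{n+1}$. The remaining factor is exactly $P_G^K(x_1,\ldots,x_{n-1},x_n+x_{n+1})$ by the formula for $P_G^K$ derived inside the proof of Lemma~\ref{lm:doubling} (with $x_n$ replaced by $x_n+x_{n+1}$). Summing over $K$ yields~\eqref{de}.

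The only step that requires care is the bookkeeping in the ``edge-present'' case: one must check that the degree of each internal vertex $v_i$ is boosted by exactly one, and that the exponents $\deg_T(n)-1$ and $\deg_T(n+1)-1$ telescope to $(x_n+x_{n+1})^t$ after summing $\varepsilon_i$; everything else is a straightforward repetition of the earlier lemma.
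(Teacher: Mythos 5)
Your proof is correct and follows essentially the same route as the paper: both split the spanning trees of $G_2$ according to whether the edge $\{n,n+1\}$ is present and reduce the edge-absent case to Lemma~\ref{lm:doubling}. The only difference is that for the edge-present trees the paper contracts $\{n,n+1\}$ and counts the $2^t$ lifts of each spanning tree of $G$, whereas you enumerate the attachment edges directly per fixed forest $K$; the resulting contribution $(x_n+x_{n+1})\,P_G(x_1,\ldots,x_{n-1},x_n+x_{n+1})$ is the same.
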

\begin{proof}  
Let $G_1$ be the graph defined in Lemma~\ref{lm:doubling},
in other words, $G_1=G_2\setminus e$, where $e$ is the edge between
$n$ and $n+1$. The spanning trees of $G_2$ are of two types:
spanning trees of $G_1$ and spanning trees of $G_2$ containing $e$.
By Lemma~\ref{lm:doubling}, the trees of the first type give a contribution $P_G(x_1, \dots,x_{n-1}, x_n+x_{n+1})\cdot \sum_{v \in N_G(n)}x_v$
to the polynomial $P_{G_2}$. As for the trees of second type,
they become the spanning trees of $G$ if we contract $e$ to a single
vertex $n$. If $T$ is a spanning a tree of $G$, and $\deg_T n=t$,
there exist $2^t$ ways to get a spanning tree of the second type
in $G_2$ which contracts to $T$ (every edge $vn$ in $T$ corresponds to either $vn$ or $v(n+1)$ in $G_2$), and the factor $x_n^{t-1}$
in the $T$'s monomial in $P_G(x_1,\ldots,x_n)$ corresponds
to $(x_n+x_{n+1})^t$ in $P_{G_2}$. Thus the sum of the monomials
of the trees of
the second type in $P_{G_2}$ equals
$(x_n+x_{n+1})P_G(x_1, x_2, \dots, x_{n-1},x_n+x_{n+1}),$
summing up with monomials of the first type we get \eqref{de}.
\end{proof}
\begin{lemma}[Gluing]
\label{lm:gluing}
Let $G$ be a connected graph with a cut vertex $v$.
Let $V_1,\ldots,V_k$ be the vertex sets
of the connected components of $G\setminus v$,
and denote $G_i=G[V_i \cup \{v\}]$. 
Then
\[
P_G(x_1, x_2, \dots, x_n)=x_v^{k-1}\prod_{i=1}^kP_{G_i}, 
\]
where the variables in the polynomials $P_{G_i}$ 
correspond to the vertices of $G_i$.
\end{lemma}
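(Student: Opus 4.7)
The plan is to set up a bijection between $S(G)$ and $S(G_1)\times\cdots\times S(G_k)$ and then to track the degree contributions at the cut vertex $v$. First I would verify the bijection: given $T\in S(G)$, let $T_i$ denote the subgraph of $T$ induced on $V_i\cup\{v\}$. Since $v$ disconnects $G$, any path in $T$ between vertices of different $V_i$'s must pass through $v$, so each $T_i$ is connected, spans $G_i$, and has no cycle, i.e.\ $T_i\in S(G_i)$. Conversely, given $T_i\in S(G_i)$ for each $i$, their union $\bigcup_i T_i$ has $\sum_i(|V_i|+1) - k = n - 1 + k - k = n - 1$ edges (after counting $v$ once), is connected (glue at $v$), and is acyclic (a cycle would project to a cycle in some $T_i$), hence is a spanning tree of $G$.

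Next I would compare the monomial weights. For any $u\in V_i\setminus\{v\}$ one has $\deg_T(u)=\deg_{T_i}(u)$, so the variable $x_u$ contributes identically on both sides. For the cut vertex itself, since the edges of $T$ incident to $v$ are partitioned among the $T_i$'s, we have
\[
\deg_T(v)=\sum_{i=1}^k \deg_{T_i}(v).
\]
Therefore the $x_v$-exponent in the monomial of $T$ within $P_G$ is $\sum_i \deg_{T_i}(v)-1$, while in $\prod_i P_{G_i}$ the corresponding product of monomials of the $T_i$'s contributes $x_v$ to the power $\sum_i(\deg_{T_i}(v)-1)=\sum_i \deg_{T_i}(v)-k$. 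The discrepancy is exactly the factor $x_v^{k-1}$.

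Summing over $T\in S(G)$ (equivalently, over tuples $(T_1,\ldots,T_k)\in\prod_i S(G_i)$) then yields
\[
P_G(x_1,\ldots,x_n)=x_v^{k-1}\sum_{(T_1,\ldots,T_k)}\prod_{i=1}^k\prod_{u\in V(G_i)} x_u^{\deg_{T_i}(u)-1}=x_v^{k-1}\prod_{i=1}^k P_{G_i},
\]
which is the desired identity. I do not expect any real obstacle here: the whole argument is bookkeeping on the bijection, and the only substantive point is the one extra unit in the $x_v$-exponent contributed by each of the $k$ components, which collapses to the global $x_v^{k-1}$ correction.
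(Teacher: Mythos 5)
Your proof is correct and follows exactly the route the paper takes (the paper simply declares the bijection between $S(G)$ and $S(G_1)\times\cdots\times S(G_k)$ obvious); you have just written out the bookkeeping, including the $x_v^{k-1}$ correction coming from the $k$ separate "$-1$"s in the exponent of $x_v$. No issues.
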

\begin{proof}
This is obvious: a spanning tree in $G$ naturally 
corresponds to $k$ independently chosen spanning trees in $G_i$'s,
and this is how polynomials are multiplied.
\end{proof}

\section{Lemmas for the second part of the proof of Theorem~\ref{th:main}}

\begin{lemma}
\label{lm:Clong}
A cycle $C_n$, where $n \geq 5$, is not stable.
\end{lemma}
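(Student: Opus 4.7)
The plan is to apply Proposition~\ref{pr:basic}(iii) to reduce $P_{C_n}$ to a two-variable polynomial, and then exhibit an explicit zero of that polynomial in $\mathbb{H}^2$. Since each spanning tree of $C_n$ is obtained by removing a single edge, and removing the edge $(i, i+1)$ (with the convention $n+1 \equiv 1$) produces a path whose endpoints $i, i+1$ have degree $1$ and whose other vertices have degree $2$,
\[
P_{C_n}(x_1,\ldots,x_n) = \sum_{i=1}^{n} \prod_{j \notin \{i,\,i+1\}} x_j.
\]

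First, I would substitute $x_3 = x_4 = \cdots = x_n = 1$. A case analysis classifying each of the $n$ spanning-tree monomials by which of $x_1, x_2$ survive shows that removing edge $(1,2)$ contributes $1$, removing $(2,3)$ contributes $x_1$, removing $(n,1)$ contributes $x_2$, and each of the $n-3$ remaining edges contributes $x_1 x_2$. Hence
\[
h(x_1,x_2) := P_{C_n}(x_1, x_2, 1, \ldots, 1) = (n-3)\,x_1 x_2 + x_1 + x_2 + 1,
\]
which by Proposition~\ref{pr:basic}(iii) would be real stable if $P_{C_n}$ were.

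Second, I would exhibit $z_1, z_2 \in \mathbb{H}$ with $h(z_1, z_2) = 0$. Setting $z_1 = i$ and solving linearly for $z_2$ gives $z_2 = -(i+1)/(1 + (n-3)i)$, whose imaginary part is $(n-4)/(1 + (n-3)^2)$ and is strictly positive precisely when $n \geq 5$. Thus $z_2 \in \mathbb{H}$, contradicting the real stability of $h$, and therefore of $P_{C_n}$.

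No serious obstacle is anticipated: the only non-routine step is the bookkeeping that collapses $P_{C_n}$ into $h$. An alternative shortcut is the standard characterization that a multi-affine polynomial $a x_1 x_2 + b x_1 + c x_2 + d$ with positive real coefficients is real stable iff $ad \leq bc$; here $ad = n-3$ and $bc = 1$, failing exactly for $n \geq 5$.
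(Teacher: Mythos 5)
Your proof is correct, and it takes a cleaner route than the paper's. The paper handles $n=5$ separately (via the ad hoc substitution $P_{C_5}(1,x_2,-1,x_4,x_5)$ in the introduction) and, for $n\geq 6$, first applies the inversion operation of Proposition~\ref{pr:basic}(i) in every variable to turn $P_{C_n}=\prod_i x_i\sum_i (x_ix_{i+1})^{-1}$ into $\sum_i x_ix_{i+1}$, and only then specializes variables to reach $x_1x_2+1$. You skip the inversion entirely: specializing $x_3=\dots=x_n=1$ directly gives $h(x_1,x_2)=(n-3)x_1x_2+x_1+x_2+1$, and the explicit root $z_1=i$, $z_2=\bigl(-(n-2)+i(n-4)\bigr)/\bigl(1+(n-3)^2\bigr)\in\mathbb{H}$ finishes the argument uniformly for all $n\geq 5$ using only Proposition~\ref{pr:basic}(iii). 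Your bookkeeping of the $n$ spanning-tree monomials (one contributing $1$, one $x_1$, one $x_2$, and $n-3$ contributing $x_1x_2$) checks out, as does the sign of $\Im z_2$, which is positive exactly when $n\geq 5$. What your approach buys is the elimination of both the case split and the use of part (i); what the paper's buys is a slightly prettier intermediate object ($x_1x_2+1$ with root $x_1=x_2=i$). Your closing observation that a multi-affine $ax_1x_2+bx_1+cx_2+d$ with nonnegative real coefficients is real stable iff $ad\leq bc$ is a correct known criterion and pinpoints $n=5$ as the exact threshold, which is a nice bonus the paper's proof does not make visible.
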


\begin{proof}
The case of $n=5$ is considered in the introduction. Let
further $n \geq 6$.
Note that the spanning trees of $C_n$
(let the vertices be cyclically enumerated as $1,\ldots,n$) are obtained by removing an edge,
and we get 
\[
P_{C_n}(x_1, x_2, \dots, x_n)=\prod_{i=1}^n x_i \cdot\sum_{i=1}^n\left(\frac{1}{x_ix_{i+1}}\right), 
\]
where $x_{n+1}=x_{1}$.
Assume that this polynomial is real stable. Then using 
p.~(i) of Proposition~\ref{pr:basic} 
with respect to all variables we get that
\[
Q(x_1, x_2, \dots, x_n)=\sum_{i=1}^n x_ix_{i+1} \]
is real stable. Putting $x_k=0$ for $k \not=1,2, 4, 5$, $x_4=x_5=1$, 
by p.~(iii) of Proposition~\ref{pr:basic} we get 
that $x_1x_2+1$ is real stable, which is not the case
as may be seen from the value at $x_1=x_2=i$.
A contradiction.
\end{proof}

\begin{lemma}
\label{lm:forbidden}
Gem, domino and house are not stable.
\end{lemma}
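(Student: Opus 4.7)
The plan is to handle each of the three forbidden graphs by the same recipe: compute the spanning tree polynomial $P_G$, use item (iii) of Proposition~\ref{pr:basic} to specialize several variables to the real value $1$, and then exhibit an explicit point of $\mathbb{H}^k$ at which the reduced polynomial vanishes. Each such zero contradicts real stability of $P_G$.

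For the gem I label the apex as $1$ and the base path as $2{-}3{-}4{-}5$. Enumerating the $21$ spanning trees (grouped, for example, by the set $S \subseteq \{2,3,4,5\}$ of neighbours of $1$ in the tree) and collecting monomials, one obtains the closed form
\[
P_{\mathrm{gem}} = (x_1+x_3)(x_1+x_4)(x_1+x_2+x_3+x_4+x_5) + x_1 x_2 x_5.
\]
After setting $x_1 = 1$ via (iii), the point $(x_2,x_3,x_4,x_5) = (2+i,\,-1+i,\,-1+i,\,2+i) \in \mathbb{H}^4$ yields first summand $i \cdot i \cdot (3+4i) = -(3+4i)$ and second summand $(2+i)^2 = 3+4i$, which sum to $0$.

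For the house (cycle $1{-}2{-}3{-}4{-}5$ with chord $24$) I will substitute $x_1 = x_3 = x_5 = 1$ via (iii) and enumerate the $11$ spanning trees to obtain a bivariate polynomial $f(x_2, x_4)$. Its evaluation at $x_2 = x_4 = z$ simplifies to $f(z,z) = z(2z^2 + 5z + 4)$; the root $z_0 = (-5+i\sqrt{7})/4$ of $2z^2+5z+4$ has positive imaginary part, so $(z_0, z_0) \in \mathbb{H}^2$ is a zero of $f$.

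For the domino (cycle $1{-}2{-}3{-}4{-}5{-}6$ with chord $14$) I will substitute $x_2 = x_3 = x_5 = x_6 = 1$ via (iii) and group the $15$ spanning trees by the pair $(\deg_T(1), \deg_T(4))$; the resulting polynomial in $x_1, x_4$ simplifies to $u(u+2)$, where $u := (x_1+1)(x_4+1) - 1$. The choice $x_1 = x_4 = -1+i \in \mathbb{H}$ gives $(x_1+1)(x_4+1) = i^2 = -1$, so $u = -2$ and $u(u+2) = 0$. The main obstacle in each case will be the spanning-tree enumeration needed to put $P_G$ (or its chosen specialization) into a form in which the non-stability witness can be read off; once that is done, checking the vanishing at the explicit point is a one-line verification.
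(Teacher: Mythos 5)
Your proposal is correct and follows essentially the same route as the paper: compute each spanning-tree polynomial, specialize some variables to real values via Proposition~\ref{pr:basic}(iii), and exhibit an explicit zero with all coordinates in $\mathbb{H}$; your closed forms (e.g.\ $(x_1+x_3)(x_1+x_4)(x_1+x_2+x_3+x_4+x_5)+x_1x_2x_5$ for the gem, $(x_1+1)^2(x_4+1)^2-1$ for the specialized domino) agree with the paper's expanded polynomials, and all three witness points check out. The only cosmetic difference is that for the gem the paper substitutes $x_2=x_5=-1$, $x_3=x_4=1$ to reduce to a univariate polynomial with root $-1+i$, whereas you keep four variables and use the point $(2+i,-1+i,-1+i,2+i)$.
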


\begin{proof} It is straightforward to compute the
vertex spanning polynomials of these graphs. For the house it equals
\begin{equation*}
P_{house}(x_1,x_2,x_3,x_4,x_5)=(x_1+x_4)x_2^2+(x_3+x_4+x_5)(x_1+x_4)x_2+x_4x_5(x_1+x_3+x_4),\\
\end{equation*}
if it were stable, so would be 
\[
Q(x)=P_{house}(1, x, 1, x, 1) 
\]
by p.~(iii) of Proposition~\ref{pr:basic}, 
but $Q(x)=x(2x^2+5x+4)$, but $Q$ has a root  $\frac{-5+i\sqrt{7}}{4} \in \mathbb{H}$, a contradiction. Thus the house
is not stable.

For the gem the vertex spanning polynomial equals
\[
P_{gem}(x_1,x_2,x_3,x_4,x_5)=x_1^3+(x_2+2x_3+2x_4+x_5)x_1^2+
\]
\[
(x_3^2+(x_2+3x_4+x_5)x_3+(x_4+x_5)(x_4+x_2))x_1+x_3x_4(x_2+x_3+x_4+x_5).
\]
If it were stable, so would be
\[
Q(x)=P_{gem}(x, -1, 1, 1, -1)
\]
by p.~(iii) of Proposition~\ref{pr:basic},
but $Q(x)=x(x^2+2x+2)$ has a root $-1+i \in \mathbb{H}$, 
a contradiction. Thus the gem is not stable either.

Finally, for the domino the vertex spanning polynomial equals
\[
P_{domino}(x_1,x_2,x_3,x_4,x_5,x_6)=(x_4+x_6)(x_2+x_4)x_1^2 +(x_4+x_6)(x_3+x_5)(x_2+x_4)x_1+x_3x_4x_5(x_2+x_4+x_6).
\]
Now the polynomial
\[
Q(x)=P_{domino}(x,1,1,x,1,1)=x(x+2)(x^2+2x+2)
\]
has a root $-1+i \in \mathbb{H}$, and we analogously get a contradiction
and conclude that the domino is not stable.
\end{proof}

\begin{lemma}
\label{lm:reduced}
Let $G=(V,E)$ be a stable finite connected simple  graph
and let a subset $U\subset V$ be such that the graph $G[U]$
is connected. Then $G[U]$ is also stable.
\end{lemma}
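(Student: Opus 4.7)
The plan is to reduce to the case of deleting a single vertex and then iterate. The main identity is the specialization of $P_G$ at $x_v=0$ for the vertex $v$ to be removed. A spanning tree $T$ of $G$ contributes to $P_G(x_1,\ldots,x_{n-1},0)$ only when $\deg_T(v)=1$: for $\deg_T(v)\ge 2$ the factor $x_v^{\deg_T(v)-1}$ vanishes at $x_v=0$, while for $\deg_T(v)=1$ it equals $1$. Such trees are in bijection with pairs $(T',u)$, where $T'$ is a spanning tree of $G\setminus v$ and $u\in N_G(v)$ is the unique neighbor of $v$ in $T$; since $\deg_T(u)=\deg_{T'}(u)+1$ and all other degrees coincide, tracking exponents gives
\[
P_G(x_1,\ldots,x_{n-1},0)=\Bigl(\sum_{u\in N_G(v)}x_u\Bigr)\,P_{G\setminus v}(x_1,\ldots,x_{n-1}).
\]

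Assuming $G\setminus v$ is connected, the left-hand side is real stable by part~(iii) of Proposition~\ref{pr:basic}. The linear factor $\sum_{u\in N_G(v)}x_u$ is a nonzero polynomial (since $G$ is connected with $|V|\ge 2$, so $v$ has a neighbor), and $P_{G\setminus v}$ is also nonzero (as $G\setminus v$ admits a spanning tree). A product of two nonzero real polynomials is real stable if and only if both factors are, since at $z\in\mathbb{H}^{n-1}$ the product vanishes exactly when one of the factors does. Hence $P_{G\setminus v}$ is real stable, i.e.\ $G\setminus v$ is stable.

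To prove the lemma in general I induct on $|W|=|V\setminus U|$, the base case $W=\emptyset$ being trivial. For the inductive step it suffices to exhibit some $w\in W$ with $G\setminus w$ connected: the single-vertex case then gives stability of $G\setminus w$, which is a connected stable graph with $(G\setminus w)[U]=G[U]$ still connected, so the inductive hypothesis yields stability of $G[U]$. To find such a $w$, form the multigraph $G/U$ obtained by contracting $U$ to a single vertex $u^*$. It is connected on vertex set $\{u^*\}\cup W$ of size at least two, hence has at least two non-cut vertices, so in particular a non-cut vertex $w\in W$. Connectedness of $G/U\setminus w$ combined with connectedness of $G[U]$ gives connectedness of $G\setminus w$: any walk in $G/U\setminus w$ between two vertices of $V\setminus\{w\}$ lifts to a walk in $G\setminus w$ by replacing each visit to $u^*$ with a path through $G[U]$. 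The only content-bearing step is the combinatorial identity for $P_G$ at $x_v=0$; the remaining ingredients are routine applications of Proposition~\ref{pr:basic} and a short graph-theoretic observation.
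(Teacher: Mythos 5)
Your proof is correct and follows essentially the same route as the paper: the same identity $P_G(x_1,\ldots,x_{n-1},0)=\bigl(\sum_{u\in N_G(v)}x_u\bigr)P_{G\setminus v}$ for the one-vertex case, the same factor-stability argument, and induction on $|V\setminus U|$. The only difference is bookkeeping: you peel vertices from the outside in (locating a non-cut vertex of $G/U$ outside $U$), whereas the paper peels from the inside out, simply adjoining to $U$ a neighbor $u\in V\setminus U$ so that $G[U\cup\{u\}]$ is connected and applying the inductive hypothesis to the larger set --- which avoids the contraction argument but is otherwise equivalent.
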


\begin{proof}
We start with case when $|V\setminus U|=1$, say, $V=U\sqcup \{n\}$
and $U=\{1,\ldots,n-1\}$. Then $Q(x_1,\ldots,x_{n-1}):=P_{G}(x_1,\ldots,x_{n-1},0)$
is an identical zero or a  real stable polynomial by p.~(iii) of Proposition~\ref{pr:basic}. Note that $Q$ is the sum of monomials of $P_G$
which correspond to the spanning trees of $G$ in which degree 
of vertex $n$ equals 1. Every such tree is obtained by gluing 
a spanning tree of $G[U]$ and an edge from $n$ to a neighbor of
$n$. This corresponds to the equality of polynomials
\[
Q(x_1,\ldots,x_{n-1})=P_{G[U]}(x_1,\ldots,x_{n-1})\sum_{v\in N_G(v)} x_v.
\]
Since $G[U]$ is connected, both factors in the right hand side are non-zero, thus both are stable polynomials.

Now we proceed by induction on $|V\setminus U|$, with the base
case considered above. For the induction step, assume that $|V\setminus U|>1$
and that for smaller values of $|V\setminus U|$ the claim holds.
There exists a vertex $u\in V\setminus U$ such that $G[U\sqcup \{u\}]$
is connected. By the induction hypothesis, the graph
$G[U\sqcup \{u\}]$ is stable, and by the base case so is $G[U]$.
\end{proof}

\section{Discussion}

\subsection{Byproduct results}

\paragraph{Cayley's formula.} Applying Lemmas~\ref{lm:doubling},~\ref{lm:doublingplusedge} and~\ref{lm:gluing} inductively we see that $P_G$ for a distance-hereditary graph $G$ is not only real stable, but equals to a product of linear forms which are certain sums of variables. For example, we easily recover Cayley's classical result 
\begin{equation}
P_{K_n}(x_1, x_2\dots, x_n)=(x_1+x_2+\dots+x_n)^{n-2}.
\label{Cayley}    
\end{equation}
and another known formula
\[
P_{K_{m,n}}=(x_1+x_2+\dots+x_n)^{m-1}(y_1+y_2+\dots+y_m)^{n-1},
\]
where the $x_i$'s correspond to the vertices of one part of $K_{m,n}$ and the $y_i$'s to the other one.

\subsection{Combinatorial corollaries}
\label{sybsect:CombCor}

\paragraph{The number of spanning trees.} Recall that the vertex spanning polynomial of a distance-hereditary graph $G$ a product of linear forms with zero-one coefficients. Then the number of spanning trees in a distance-hereditary graph with $n$ vertices is a product of $n-2$ integer multiples not exceeding $n$.

\paragraph{More about Newton polytopes.}  Consider a graph $G$ and define a polynomial 
\[
Q_{G,f}(y_1, y_2, \dots, y_k):=P_G(y_{f(1)},y_{f(2)}, \dots y_{f(n)}),
\]
where $f$ is a map from $1, 2, \dots, n$ to $1, 2, \dots, k$. Note that a stable polynomial with positive coefficients preserves these properties after identifying some variables. So for a distance-hereditary graph $G$ the Newton polytope of $Q_{G,f}$ is also saturated (Proposition~\ref{pr:longcyclesarenotWeaklySt} shows that the Newton polytope of a general graph may loose saturation property after an identification of some variables).

The map $f$ can be considered as a $k$-coloring of vertices. Theorem~2.5 in~\cite{Csikvari2022ASS} gives some inequalities on the numbers of trees with a given color statistics.

\subsection{Further questions}

\paragraph{Weighted problem.} Define a  \textit{weighted vertex spanning polynomial} of the graph $G$ equipped with a weight function
$w\colon E(G)\to \mathbb{R}$ as
\[
P_{G,w}(x_1, x_2, \dots, x_n)= \sum_{T \in S(G)} \prod_{e \in T}w(e) \prod_{v \in V} x_v^{\deg_T(v)-1} .
\]

A natural weight analogue of Problem \ref{pr:main} is

\begin{problem}
\label{pr:2}
For which pairs $(G, w)$ is the polynomial 
\[
P_{G,w}(x_1, x_2, \dots, x_n)= \sum_{T \in S(G)} \prod_{e \in T}w(e) \prod_{v \in V} x_v^{\deg_T(v)-1} 
\]
real stable?
\end{problem}

We call a pair $(G,w)$ \textit{weighted-stable}, if $P_{G,w}$ is stable.
Without loss of generality $w$ does not take zero values
(such edges may be safely removed). Also the weighted analog of Lemma~\ref{lm:gluing} reduces Problem~\ref{pr:2} to the class of two-connected graphs.

Note that if the two-connected graph $G$ and a weight function  
$w$ form a weighted-stable pair, then $w$  has the same sign on $E$.
Indeed, assume that $w$ takes values
of different signs. Then there is a vertex $v$ with two
edges of different sign incident to $v$. Note that if we put
$x_v=0$ to the polynomial $P_{G,w}$, we get the polynomial
\[
Q=P_{G[V\setminus v],w} \left( \sum_{u \in N_G(v)}x_uw(uv) \right),
\]
since the trees in $G$ with degree of $v$ equal to 1 
correspond to pairs ``a tree in $G[V\setminus v]$
and an edge incident with $v$''. Since $G$ is two-connected,
the graph $G[V\setminus v]$ is connected, thus 
$P_{G[V\setminus v],w}$ is not identically zero, and so is $Q$.
Therefore $Q$ is stable by p.~(iii) of Proposition~\ref{pr:basic}. 
Thus $Q$'s divisor
\[
R=\sum_{u \in N_G(v)}x_uw(uv)
\]
is also stable, but by our assumption there exist vertices
$u_1,u_2 \in N_G(v)$ such that
$w(vu_1)<0$, $w(vu_2)>0$. It is easy to
find $x_u$'s in $\mathbb{H}$ such that $R$ takes zero value, a contradiction.

So, Problem~\ref{pr:2} reduces to the
case of positive weights.

\paragraph{Weakly stable graphs.} We call the graph \textit{weakly stable}, if the Newton polytope of every $Q_{G,f}$ is saturated. Clearly, every stable graph is weakly stable. The following problem arises.

\begin{problem}
Which graphs are weakly stable?
\end{problem}

The following proposition shows that some graphs are not weakly stable.

\begin{proposition}
\label{pr:longcyclesarenotWeaklySt}
Cycles $C_n$ are not weakly stable for $n \geq 6$.
\end{proposition}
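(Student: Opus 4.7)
The plan is to exhibit, for each $n\geq 6$, an explicit vertex coloring $f\colon\{1,\dots,n\}\to\{1,2,3\}$ for which the Newton polytope of $Q_{C_n,f}$ is not saturated.

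First I would set up a combinatorial description of the Newton polytope. Label the cycle cyclically as $1,2,\dots,n$; every spanning tree of $C_n$ is obtained by deleting one edge $(i,i+1)$ and contributes the monomial $\prod_{j\notin\{i,i+1\}}x_j$ to $P_{C_n}$. Writing $A_r=f^{-1}(r)$, after identification this monomial becomes $\prod_{r=1}^{3} y_r^{|A_r|-a_r^{(i)}}$, where $a_r^{(i)}=|A_r\cap\{i,i+1\}|$. Set $v_i:=e_{f(i)}+e_{f(i+1)}\in\mathbb{Z}_{\geq 0}^{3}$ (indices taken modulo $n$, with $e_j+e_j$ read as $2e_j$); each $v_i$ is a non-negative integer vector of coordinate sum $2$. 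The Newton polytope of $Q_{C_n,f}$ is the image of $\operatorname{conv}\{v_1,\dots,v_n\}$ under the affine map $v\mapsto (|A_1|,|A_2|,|A_3|)-v$, and its integer points correspond bijectively to non-negative integer vectors of coordinate sum $2$ lying in $\operatorname{conv}\{v_i\}$. A monomial of $Q_{C_n,f}$ corresponds to some $v_i$ in that set, so proving non-saturation reduces to finding an integer vector with non-negative coordinates summing to $2$ that lies in $\operatorname{conv}\{v_i\}$ but is not among the $v_i$'s themselves.

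Next I would choose the coloring: define $f$ by $f(1)=f(2)=1$, $f(4)=f(5)=2$, and $f(j)=3$ for $j\in\{3\}\cup\{6,7,\dots,n\}$. Reading cyclically, the consecutive pairs of colors are $(1,1),(1,3),(3,2),(2,2),(2,3),(3,3),\dots,(3,3),(3,1)$, so $\{v_1,\dots,v_n\}=\{2e_1,\,2e_2,\,2e_3,\,e_1+e_3,\,e_2+e_3\}$; the key point is that the pairs $(1,2)$ and $(2,1)$ never occur because the $1$-block and the $2$-block are separated on both sides by a $3$. Now $e_1+e_2=\tfrac{1}{2}(2e_1)+\tfrac{1}{2}(2e_2)$ lies in $\operatorname{conv}\{v_i\}$, is a non-negative integer vector of coordinate sum $2$, and is not equal to any $v_i$. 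Consequently the integer point $(|A_1|,|A_2|,|A_3|)-(e_1+e_2)=(1,1,n-4)$ lies in the Newton polytope of $Q_{C_n,f}$, whereas the monomial $y_1y_2y_3^{n-4}$ does not appear in $Q_{C_n,f}$, contradicting saturation.

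There is essentially no serious obstacle in this plan; the only design choice is the cyclic coloring, and the construction above handles every $n\geq 6$ uniformly. The same argument fails for $n=5$: any cyclic word on $\{1,2,3\}$ of length $5$ containing both $(1,1)$ and $(2,2)$ as consecutive pairs must also contain $(1,2)$ or $(2,1)$, since separating the two doubled blocks by a single $3$ on each side already costs $2+1+2+1=6$ letters, which is exactly why the statement is phrased for $n\geq 6$.
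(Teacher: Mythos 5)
Your proof is correct and follows essentially the same route as the paper: the identical coloring ($f(1)=f(2)=1$, $f(4)=f(5)=2$, everything else $3$), with the missing monomial $y_1y_2y_3^{n-4}$ exhibited as the midpoint of the present monomials $y_1^2y_3^{n-4}$ and $y_2^2y_3^{n-4}$. The only nitpick is that for $n=6$ the vector $2e_3$ does not actually occur among your $v_i$ (there is no consecutive pair of $3$-colored vertices), but this is immaterial since your argument only uses $2e_1$ and $2e_2$.
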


\begin{proof}
Construct the following map $f$ from $1, 2, \dots, n$ to $1, 2, 3$ as follows $f(1)=f(2)=1, f(4)=f(5)=2,f(k)=3, k \not= 1,2,4,5$.
We claim that the Newton polytope of $Q_{G,f}$ is not saturated. Indeed,
\[
P_{C_n}(x_1, x_2, \dots, x_n)=\prod_{i=1}^n x_i \cdot\sum_{i=1}^n\left(\frac{1}{x_ix_{i+1}}\right), 
\]
\[
Q_{G,f}(y_1,y_2,y_3)=y_1^2y_2^2y_3^{n-4} \left (\frac{2}{y_1y_3}+\frac{1}{y_1^2} +\frac{2}{y_2y_3}+ \frac{1}{y_2^2}+ \frac{n-6}{y_3^2} \right),
\]
so $Q_{G,f}$ has monomials $y_1^2y_3^{n-4}$ and $y_2^2y_3^{n-4}$ but no $y_1y_2y_3^{n-4}$.
\end{proof}

On the other hand it is straightforward to check that $C_5$ is weakly stable, so the class of weakly stable graphs differs from the class of distance-hereditary graphs.

\paragraph{Acknowledgments.} The work of Fedor Petrov is supported by the Foundation for the Advancement of Theoretical
Physics and Mathematics ``BASIS''.  The research of Danila Cherkashin is supported by <<Native towns>>, a social investment program of PJSC <<Gazprom Neft>>. The work was done in ``Sirius'' center in Sochi during the
research school for high school students. We are grateful to all the organizers and participants.

\bibliography{main}
\bibliographystyle{plain}

\end{document}